\tikzstyle{mybox} = [draw=black, fill=white,  thick,
\tikzstyle{mybox} = [draw=black, fill=white,  thick,
\newtheorem{thm}{Theorem}
\theoremstyle{definition}
\newtheorem{definition}{Definition}
\newtheorem{remark}{Remark}
\def\Cal{\mathcal}
\def\br {{\mathbb R}} 
\def\bh {{\mathbb H}} 
\def\bc {{\mathbb C}} 
\def\al {\alpha}
\begin{document}


\title{Algorithms and Polynomiography for Solving Quaternion Quadratic Equations}
\author{Fedor Andreev\thanks{Department of Mathematics,
        Western Illinois University, {\tt F-Andreev@WIU.EDU}}
        ~and Bahman Kalantari\thanks{Department of Computer Science,
        Rutgers University, {\tt kalantari@cs.rutgers.edu}}}
\date{}
\maketitle

\date{}
\maketitle



\begin{abstract}
Solving a quadratic equation $P(x)=ax^2+bx+c=0$  with real coefficients is known to middle school students.  Solving the equation over the quaternions is not straightforward. Huang and So \cite{Huang} give a complete set of formulas, breaking it into several cases depending on the coefficients.  From a result of the second author in \cite{kalQ}, zeros of $P(x)$ can be expressed in terms of the zeros of a real quartic equation. This drastically simplifies solving a quadratic equation. Here we also consider solving $P(x)=0$ iteratively via Newton and Halley methods developed in \cite{kalQ}. We prove a property of the Jacobian of Newton and Halley methods and describe several 2D {\it polynomiography} based on these methods. The images not only encode the outcome of the iterative process, but by measuring the time taken to render them we find the relative speed of convergence for the methods.

\end{abstract}

{\bf Keywords:}    Quaternions; Polynomials; Newton method; Halley method; Polynomiography



\section{Introduction} In this article we are interested in solving a quaternion quadratic  equation
\begin{equation}
P(x)=ax^2+bx+ c=0, \quad a, b, c \in \mathbb{H}, \quad a \not =0
\end{equation}
where $\mathbb{H}$ is the set of quaternions, an extension of the
complex field $\mathbb{C}$, first described by Sir William Rowan
Hamilton in 1843:
\begin{equation}
\mathbb{H}=\{q=a_1+a_2{\bf i}+a_3{\bf j}+a_4{\bf k}~|~ a_i \in \mathbb{R} \},
\end{equation}
where $\mathbb{R}$ is the reals and
\begin{equation}
{\bf i}^2={\bf j}^2={\bf k}^2={\bf ijk}=-1.
\end{equation}
These imply
\begin{equation}
{\bf ij}=-{\bf ji}={\bf k},~~~{\bf jk}=-{\bf kj}={\bf i},~~~{\bf ki}=-{\bf ik}={\bf j}.
\end{equation}
The set of quaternions  is a non-commutative division ring. It can
be identified with $\mathbb{R}^4$
\begin{equation}
a_1+a_2{\bf i}+a_3{\bf j}+a_4{\bf k} \Longleftrightarrow (a_1,a_2,a_3,a_4).
\end{equation}

The variable $x$ is assumed to commute with the coefficients, however at a given quaternion $q$ the evaluation of the polynomial is defined as

\begin{equation}
P(q)=aq^2+bq+c.
\end{equation}
If $P(q)=0$ we say $q$ is a root. Since $a \not=0$ we can multiply by its inverse to get $q^2+a^{-1}bq+a^{-1}c=0$.
Thus  without loss of generality we assume $a=1$ so that the monic equation of interest is
\begin{equation} \label{quadeqn}
P(x)=x^2+bx+c=0.
\end{equation}

Unlike the complex case, over the quaternions solving a quadratic
equation is nontrivial. Huang and So \cite{Huang} give complete
formulas for a quadratic equation $x^2+bx+c=0$, breaking it into
several cases depending on the constants $b$ and $c$. Applications of
this special case of quaternion equation are described in Heidrich
\cite {Heidrich} and Huang and So \cite{Huang2}. Others have studied
solution of quaternion quadratic equations such as Niven
\cite{Niven}, Zhang and Mu \cite{Zhang2}. The quadratic equations
where $x$ and the coefficients may not commute  is harder to
solve. Porter \cite{Porter} considers such a case, e.g. $x^2
+axb+cxd+e=0$ where a solution is already known. A method for solving
more general non-unilateral quadratic equations is described in Jia
et al \cite{Jia}.

Our interest here lies in solving quaternion quadratic
equation (\ref{quadeqn}). Recently, in \cite{kalQ} several algorithms for solving a general quaternion polynomial
equation are described. In particular, for a quaternion quadratic it  gives new
algorithms for computing the solutions, one of which drastically
simplifies the Huang and So \cite{Huang} formulas. This is in the
sense that the solutions to $P(x)=0$ are directly related to a
quartic equation with real coefficients for which one may use existing formulas. In particular, we can
apply any complex polynomial root-finding algorithm. We will
give some computational results as well as their polynomiography.
Polynomiography refers to algorithmic visualization of a polynomial equation, see \cite{kalbook}-\cite{kal2005c}. In
other algorithms we develop Newton and Halley methods directly
working on $P(x)$ in the quaternion space. We will present computational
results and 2D polynomiography with these algorithms.

Next we review some basic properties of the quaternions. Some
relevant references are Lam \cite{Lam}, Niven \cite{Niven}, and
Zhang \cite{Zhang}. The {\it conjugate} of a quaternion $q=a_1+{\bf
i}a_2+{\bf j}a_3+{\bf k}a_4$ is defined as
\begin{equation}
\overline q =a_1-a_2{\bf i}-a_3{\bf j}-a_4{\bf k}.
\end{equation}
The number $a_1$ is the {\it real part} of $q$. The {\it trace} of $q$ is
\begin{equation}
t(q)=q+ \overline q.
\end{equation}
The {\it norm} of $q$ is
\begin{equation}
\nu(q)=|q | = \sqrt{q \overline q} = \sqrt{\overline q q} = \sqrt{a_1^2+a_2^2+a_3^2+a_4^2}.
\end{equation}
The {\it inverse} of a nonzero quaternion $q$  is the unique quaternion denoted by $q^{-1}$ such that $q q^{-1}=q^{-1}q=1$.
It follows that
\begin{equation}
q^{-1}=  \frac{\overline q }{| q |^2}.
\end{equation}
Given quaternions $q_1, q_2$ we have
\begin{equation}
\overline {q_1+q_2}=\overline {q_1} + \overline {q_2}, \quad \overline{q_1q_2}= \overline {q_2} \cdot  \overline{q_1}, \quad |q_1q_2|= |q_1||q_2|.
\end{equation}
The division of a quaternions $q_1$ by $q_2$ must be specified either as  $q_1  q_2^{-1}$ or as $q_2^{-1}  q_1$.
It can be shown that when  $q_1, q_2$ are nonzero then
\begin{equation}
(q_1q_2)^{-1}=q_2^{-1}q_{1}^{-1}.
\end{equation}

Two quaternions $q$ and $q'$ are said to be {\it congruent} or {\it
equivalent}, written $q \sim q'$, if for some  quaternion $w \not
=0$ we have $q'=wqw^{-1}.$ The {\it congruent class} of $q=a_1+a_2{\bf
i}+a_3{\bf j}+a_4{\bf k}$, denoted by $[q]$ is the set of all
quaternions congruent to $q$. It can be shown

\begin{equation} \label{charcter}
[q]=\{a_1+x_2{\bf i}+x_3{\bf j}+x_4{\bf k}~|~ x_2^2+x_3^2+x_4^2=a_2^2+a_3^2+a_4^2\}.
\end{equation}
From the above it follows that  $[q]$ is a singleton element if and
only if $q$ is a real number. If $q$ is not real its congruent class
is the three-dimensional sphere in $ \mathbb{R}^4$ centered at
$(a_1,0,0,0)$ having radius $\sqrt{a_2^2+a_3^2+a_4^2}$. It
follows that any quaternion is congruent to a complex number with
the same real part and norm, specifically $q \sim a_1+ {\bf i}
\sqrt{a_2^2+a_3^2+a_4^2}$.

The {\it characteristic polynomial} of a non-real quaternion $q=a_1+a_2{\bf i}+a_3{\bf j}+a_4{\bf k}$ is
\begin{equation}
P_q(x)=x^2 - t(q)x+ \nu^2(q).
\end{equation}

From (\ref{charcter}) it follows that for any $q' \in [q]$ we have
\begin{equation} \label{Char2}
P_{q'}(x)=x^2 - t(q')x+ \nu^2(q')=(x-q')(x-\overline{q'})=P_q(x).
\end{equation}

The {\it discriminant} of $P_q(x)$ is
\begin{equation}
\Delta=t^2(q)-4\nu^2(q)=4a_1^2-4(a_1^2+a_2^2+a_3^2+a_4^2)=-4(a_2^2+a_3^2+a_4^2) <0.\end{equation}
The set of zeros of $P_q(x)$ is $[q]$. In particular, the quadratic
polynomial $x^2+1$ is the characteristic polynomial of $\pm {\bf
i}$, $\pm {\bf j}$, $\pm {\bf k}$, its zeros constitute the unit
sphere centered at the origin $(0,0,0,0)$:
\begin{equation}
\{x_2{\bf i}+x_3{\bf j}+x_4{\bf k} \in \mathbb{H}~|~ x_2^2+x_3^2+x_4^2 =1\}.
\end{equation}

Conversely, given reals $t \geq 0$ and $\nu >0$ satisfying $t^2-4
\nu^2 <0$, the polynomial $G(x)=x^2-tx+\nu^2$ has roots
$\theta=\frac{1}{2}(t + i\sqrt{t^2-4 \nu^2})$ and its conjugate
$\overline \theta$. The set of roots of $G(x)$ is $[\theta]$.

The infinitude of zeroes is only one of the peculiarities with quaternion polynomials. Another one is a possibility of having less roots than the degree.
Consider $x^2-({\bf i}+{\bf j})x+{\bf k}$. It can be shown that
this equation has only one root, ${\bf j}$ with multiplicity  one,
see \cite{kalQ}.

In this article we consider quadratic quaternion equations and algorithms for solving them, as well as their polynomiography.  In Section 2, we review two theorem that give exact and approximate solution of $P(x)$ in terms of those of a real quartic polynomial. In Section 3, we describe a Taylor's theorem for a quadratic polynomial, giving rise to a Newton and Halley methods. In Section 4, we prove a property of the Jacobian of Newton and Halley methods when applied to a quadratic quaternion. This results in the definition of a {\it local invariant plane} for these iteration functions. In Section 5, we describe various ways for doing polynomiography with Newton and Halley methods applied to a quadratic quaternion. We describe various ways for selecting proper rectangular subsets in the quaternion space. Then the corresponding polynomiograph is rendered where each {\it initial iterate} on it goes through a sequence of {\it intermediate iterates} in the quaternion space, or projections of these iterates, before it generates a {\it terminal iterate}. In particular, we give algorithms and polynomiography based on four methods that trace: (I) quaternion iterates; (II) 2D projection of intermediate iterates, (III) 2D
projection of congruent intermediate iterates, and (IV)  iterates in locally invariant planes. All computations are carried out with Mathematica.

\section{Solutions of Quaternion Quadratic Via a Real Quartic} Given $P(x)=x^2+bx+c$, its {\it quaternion conjugate} polynomial, or just {\it conjugate} is

\begin{equation}
\overline P(x)= x^2+ \overline b x + \overline c.
\end{equation}
Let
\begin{equation}
F(x)=P(x)\overline P(x)= x^4+(b +\overline b)x^3+(c + \overline c + b \overline b)x^2 +(b \overline c + c \overline b)x+c \overline c.
\end{equation}

Clearly $F(x)$ is a quartic real polynomial. The
following relates its solutions to those of $P(x)$:

\begin{thm} \label{thmFx} {\rm (\cite{kalQ})} Let $\theta \in \mathbb{C}$ be a root of $F(x)$, then $P(x)$ has a root in $[\theta]$.
Furthermore, if $q \in \mathbb{H}$ is a root of $P(x)$, then there exist a complex number $\theta \in [q]$ which is a root of $F(x)$. More specifically,

(i) Suppose  $\theta \in \mathbb{C}$  is a root of
$F(x)$.  If $\theta$ is not a root of $\overline P (x)$, then
\begin{equation}
\overline P(\theta) \theta \overline P(\theta)^{-1}
\end{equation}
is a root of $P(x)$.

(ii) Suppose $\theta \in \mathbb{C}$ is a root of
$F(x)$. If $\theta$ is a root of $\overline P (x)$, and $\overline
\theta$ is not a root of $\overline P (x)$, then
\begin{equation}
\overline P(\overline \theta) \overline \theta ~\overline P(\overline \theta)^{-1}
\end{equation}
is a root of $P(x)$.

(iii) Suppose $\theta \in \mathbb{C}$ is a root of $F(x)$. If $\theta$  and $\overline \theta$ are both roots of $\overline P (x)$,  then both $\theta$ and $\overline \theta$ are also roots of $P(x)$. In particular, if $\theta \in \mathbb{R}$, then $P(\theta)=0$.\\

(iv) Suppose $q=a_1+a_2{\bf i}+a_3{\bf j}+a_4{\bf k}$
is a root of $P(x)$. Then $\theta=a_1+{\bf i}
\sqrt{a_2^2+a_3^2+a_4^2}$ is a root of $F(x)$. $\square$
\end{thm}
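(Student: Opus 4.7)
The plan is to establish (i) via a single algebraic identity, reduce (ii) to (i), dispatch (iii) by a short direct argument, and handle (iv) by polynomial division in $\mathbb{H}[x]$.

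For (i), the crux is the identity
\[ \overline{P}(\theta)\,\theta^2 \;+\; b\,\overline{P}(\theta)\,\theta \;+\; c\,\overline{P}(\theta) \;=\; F(\theta), \]
valid for every complex $\theta$. I would verify it by substituting $\overline{P}(\theta) = \theta^2 + \overline{b}\theta + \overline{c}$ into the left-hand side and expanding; $\theta$ commutes with its own powers, $b\overline{b}=|b|^2$ and $c\overline{c}=|c|^2$ are real, and the cross terms $b\overline{c},\,c\overline{b}$ reassemble into the coefficients of $F$ exactly. Given $F(\theta)=0$ and $A:=\overline{P}(\theta)\neq 0$, right-multiplying by $A^{-1}$ gives $A\theta^2 A^{-1} + b A\theta A^{-1} + c = 0$; since $A\theta^2 A^{-1} = (A\theta A^{-1})^2$, the element $y = A\theta A^{-1} = \overline{P}(\theta)\,\theta\,\overline{P}(\theta)^{-1}$ satisfies $P(y)=0$. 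Part (ii) then follows by applying (i) to $\overline{\theta}$: since $F$ has real coefficients, $F(\overline{\theta}) = \overline{F(\theta)} = 0$, and the hypothesis supplies $\overline{P}(\overline{\theta})\neq 0$.

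For (iii), if $\theta\in\mathbb{R}$ then conjugation gives $\overline{P(\theta)} = \overline{P}(\theta)$, so $\overline{P}(\theta)=0$ forces $P(\theta)=0$ directly. Otherwise $\theta - \overline{\theta}$ is a nonzero, hence invertible, quaternion; subtracting $\overline{P}(\theta)=0$ and $\overline{P}(\overline{\theta})=0$ and factoring (using that $\theta+\overline{\theta}$ is real and commutes with everything) yields $\bigl((\theta+\overline{\theta}) + \overline{b}\bigr)(\theta - \overline{\theta}) = 0$, forcing $\overline{b}\in\mathbb{R}$. Plugging this back into the sum of the two equations forces $\overline{c}\in\mathbb{R}$ as well, so $P = \overline{P}$ and both $\theta, \overline{\theta}$ are roots of $P$.

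For (iv), the approach is polynomial division in $\mathbb{H}[x]$ (taking $x$ central). A direct coefficient comparison first gives $P(x)\overline{P}(x) = \overline{P}(x) P(x)$; the only nontrivial equality is $b\overline{c} + c\overline{b} = \overline{b}c + \overline{c}b$, which holds because both sides equal $2\,\mathrm{Re}(b\overline{c})$. Since $P(q)=0$, long division yields $P(x) = (x + b + q)(x - q)$, whence $F(x) = \overline{P}(x)(x+b+q)(x-q)$; in any product of the form $k(x)(x-q)$ with $(x-q)$ on the right, substitution $x=q$ gives zero (because $x$ is central in $\mathbb{H}[x]$), so $F(q)=0$. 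Finally $F\in\mathbb{R}[x]$, and since the minimal polynomial of $q$ over $\mathbb{R}$ coincides with that of any $\theta \sim q$, $F(q)=0$ forces $F(\theta) = 0$ for the specific $\theta = a_1 + {\bf i}\sqrt{a_2^2 + a_3^2 + a_4^2}$. The main obstacle will be establishing the identity in (i) without mis-ordering any non-commuting terms; once it is in hand, and once the right-sided factorization is respected in (iv), everything else reduces to careful bookkeeping.
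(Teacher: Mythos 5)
The paper itself gives no proof of this theorem --- it is quoted from \cite{kalQ} --- so your argument can only be judged on its own terms, and on those terms it is correct and essentially reconstructs the standard argument. The central identity $\overline{P}(\theta)\theta^2+b\,\overline{P}(\theta)\theta+c\,\overline{P}(\theta)=F(\theta)$ checks out (all coefficients of $F$ collapse to reals exactly as you say, and it is equivalent to the factorization $P\bigl(\overline{P}(\theta)\theta\overline{P}(\theta)^{-1}\bigr)\overline{P}(\theta)=F(\theta)$ used in the cited source), and the reductions in (ii)--(iv), including the right-remainder fact that $k(x)(x-q)$ vanishes under left evaluation at $q$ and the passage from $F(q)=0$ to $F(\theta)=0$ via the real minimal polynomial, are all sound. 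The only (cosmetic) omission is that you do not explicitly note that cases (i)--(iii) are exhaustive and that each produced root lies in $[\theta]$, which is what yields the theorem's opening two-way statement; that follows immediately from the definition of congruence.
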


\subsection{Approximate Zeros of a Quaternion Polynomial}

\begin{thm} \label{Approx} {\rm (\cite{kalQ})}Let $\epsilon \in (0,1)$. Suppose $ \theta \in \mathbb{C}$ satisfies
\begin{equation}
|F(\theta)| \leq \epsilon^2.
\end{equation}
Let
\begin{equation}
q=\overline P(\theta) \theta \overline P(\theta)^{-1}, \quad q'=\overline P( \overline \theta)  \overline  \theta ~\overline P( \overline \theta)^{-1}.
\end{equation}

(i): If $|\overline P(\theta)| > \epsilon$, then $|P(q)| < \epsilon$.\\

(ii): If $|\overline P( \overline \theta)| > \epsilon$, then $|P(q')| < \epsilon$.\\

(iii): If $|\overline P(\theta)| < \epsilon$ and  $|\overline P(
\overline \theta)| < \epsilon$, then $|P(\theta)| < \sqrt{2} \epsilon$
and $|P(\overline \theta)| < \sqrt{2} \epsilon$. $\square$
\end{thm}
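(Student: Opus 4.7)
The plan is to reduce all three estimates to two algebraic identities: an exact identity $P(q)\,\overline{P}(\theta)=F(\theta)$ that handles parts (i) and (ii), and a norm-balance identity at the conjugate pair $\{\theta,\overline{\theta}\}$ that handles (iii).

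For parts (i) and (ii), I would first establish the key identity
\[
P(q)\,\overline{P}(\theta) \;=\; F(\theta).
\]
Setting $w=\overline{P}(\theta)$ and $q=w\theta w^{-1}$, associativity gives $q^2=w\theta^2 w^{-1}$, so $P(q)\,w = w\theta^2 + bw\theta + cw$. Substituting $w=\theta^2+\overline{b}\theta+\overline{c}$ and expanding, and using that $\theta\in\mathbb{C}$ commutes with its own powers while every quaternion coefficient sits to the left of a $\theta^k$, the sum collapses to
\[
\theta^4+(b+\overline{b})\theta^3+(c+\overline{c}+b\overline{b})\theta^2+(b\overline{c}+c\overline{b})\theta+c\overline{c}.
\]
Each parenthesized coefficient is real ($2\operatorname{Re}(b)$, $2\operatorname{Re}(c)+|b|^2$, $2\operatorname{Re}(b\overline{c})$, $|c|^2$), so this is exactly $F(\theta)$. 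Since $|xy|=|x||y|$, (i) follows from $|P(q)|=|F(\theta)|/|\overline{P}(\theta)|\le \epsilon^2/|\overline{P}(\theta)|<\epsilon$. Part (ii) is the same identity applied with $\overline{\theta}$ in place of $\theta$, giving $P(q')\,\overline{P}(\overline{\theta})=F(\overline{\theta})$; since $F$ has real coefficients $|F(\overline{\theta})|=|F(\theta)|\le\epsilon^2$, and the same bound follows.

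For (iii) both denominators may be small, so the previous identity is insufficient; I would instead prove
\[
|P(\theta)|^2+|P(\overline{\theta})|^2 \;=\; |\overline{P}(\theta)|^2+|\overline{P}(\overline{\theta})|^2.
\]
Decompose $\mathbb{H}=\mathbb{C}+\mathbb{C}\,{\bf j}$ and write $b=\beta_1+\beta_2{\bf j}$, $c=\gamma_1+\gamma_2{\bf j}$ with $\beta_i,\gamma_i\in\mathbb{C}$. The relation ${\bf j}\theta=\overline{\theta}\,{\bf j}$ for $\theta\in\mathbb{C}$ yields $P(\theta)=U(\theta)+V(\theta)\,{\bf j}$ and $\overline{P}(\theta)=\overline{U(\overline{\theta})}-V(\theta)\,{\bf j}$, with $U(x)=x^2+\beta_1 x+\gamma_1$ and $V(x)=\beta_2\overline{x}+\gamma_2$; the analogous formulas hold at $\overline{\theta}$. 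Because $|z_1+z_2{\bf j}|^2=|z_1|^2+|z_2|^2$, each of the four squared norms is a sum of two entries from $\{|U(\theta)|^2,|U(\overline{\theta})|^2,|V(\theta)|^2,|V(\overline{\theta})|^2\}$, and a direct bookkeeping check shows both sides of the identity contain the same four entries. Combined with the hypothesis of (iii), this gives $|P(\theta)|^2+|P(\overline{\theta})|^2<2\epsilon^2$, so each of $|P(\theta)|,|P(\overline{\theta})|$ is $<\sqrt{2}\,\epsilon$.

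The main obstacle is proving the identity $P(q)\,\overline{P}(\theta)=F(\theta)$ itself. One might first guess $F(\theta)=P(\theta)\,\overline{P}(\theta)$, but this fails over the quaternions: evaluation does not distribute across products of polynomials with non-commuting coefficients (for instance, $P(x)=x^2+{\bf j}x$ has $F({\bf i})=0$ while $P({\bf i})\,\overline{P}({\bf i})=2$). The specific conjugation twist $q=\overline{P}(\theta)\,\theta\,\overline{P}(\theta)^{-1}$ is precisely what realigns the non-commuting factors so that the quaternion parts combine in conjugate pairs and the expansion collapses to the real polynomial $F$; once that identity is in hand, (i) and (ii) follow from multiplicativity of the norm, and (iii) reduces to bookkeeping in the $\mathbb{C}+\mathbb{C}\,{\bf j}$ splitting.
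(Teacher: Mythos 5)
Your proposal is correct. Note that the paper itself gives no proof of Theorem \ref{Approx} --- it is imported from \cite{kalQ} --- so what you have supplied is a self-contained argument, and both of its ingredients check out. The central identity $P(q)\,\overline{P}(\theta)=F(\theta)$ is verified exactly as you describe: with $w=\overline P(\theta)$ one gets $P(q)w=w\theta^2+bw\theta+cw=\theta^4+(b+\overline b)\theta^3+(c+\overline c+b\overline b)\theta^2+(b\overline c+c\overline b)\theta+c\overline c$, and since every parenthesized coefficient is real this is the complex number $F(\theta)$; multiplicativity of the norm and $|F(\overline\theta)|=|F(\theta)|$ then give (i) and (ii) with the required strict inequalities (and $|\overline P(\theta)|>\epsilon>0$ guarantees $q$ is defined). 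This identity is the same mechanism that underlies Theorem \ref{thmFx}(i), so for (i)--(ii) you are essentially reconstructing the route of \cite{kalQ}. Your treatment of (iii) is the more interesting deviation: the norm-balance identity $|P(\theta)|^2+|P(\overline\theta)|^2=|\overline P(\theta)|^2+|\overline P(\overline\theta)|^2$ is correct --- writing $b=\beta_1+\beta_2{\bf j}$, $c=\gamma_1+\gamma_2{\bf j}$ and using ${\bf j}z=\overline z\,{\bf j}$ gives $P(\theta)=U(\theta)+V(\theta){\bf j}$ and $\overline P(\theta)=\overline{U(\overline\theta)}-V(\theta){\bf j}$, so the four squared norms are pairings of $|U(\theta)|^2,|U(\overline\theta)|^2,|V(\theta)|^2,|V(\overline\theta)|^2$ that balance as claimed --- and it yields (iii) without ever invoking the hypothesis $|F(\theta)|\le\epsilon^2$, which is a slightly stronger statement than the one asserted. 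Your cautionary example $P(x)=x^2+{\bf j}x$, where $F({\bf i})=0$ but $P({\bf i})\overline P({\bf i})=2$, is also accurate and correctly identifies why the conjugation twist in the definition of $q$ is indispensable.
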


From the above we see that by solving the quartic equation
$F(x)=0$ exactly or approximately we can recover the solutions of
$P(x)=0$. This can be achieved via closed formulas or numerical
methods such as Newton and  Halley methods, or any member of
an infinite family of iteration functions, see \cite{kalbook}.

\section{Taylors's Theorem, Newton and Halley Methods for Quadratics} \label{sectionVIII}

In \cite{kalQ} a Taylor's Theorem for quaternion polynomials is derived. This theorem is then used to develop Newton method for general quaternion polynomials.  We state the theorem for
a quadratic  quaternion polynomial $P(x)=x^2+bx+c$. Define its first and second derivatives as:

\begin{equation}
P'(x)=2x+b,  \quad P''(x)=2.
\end{equation}

\begin{thm} \label{Expansion} Let $\xi$ be a root of $P(x)=x^2+bx+c$ and $q$ any quaternion. Then
\begin{equation} \label{T15eqn.p1}
E(\xi, q) \equiv \sum_{k=0}^2 {{P^{(k)}(q)} \over {k!}} (\xi - q)^k= (q \xi - \xi q).
\end{equation}
In particular, if $q$ commutes with $\xi$,  $E(\xi, q)=0$. $\Box$
\end{thm}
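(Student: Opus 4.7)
The plan is to prove the identity by direct expansion, the only subtlety being the non-commutativity of quaternion multiplication. First I would substitute the derivatives given in the statement to write
\begin{equation*}
E(\xi,q) = (q^2+bq+c) + (2q+b)(\xi-q) + (\xi-q)^2.
\end{equation*}
The aim is to show that the $q^2$ and $bq$ pieces cancel out, that the terms involving $\xi$ alone reassemble into $P(\xi)$, and that exactly the commutator $q\xi-\xi q$ is left over.

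Next I would expand each of the three summands carefully, paying attention to the order of factors. Thus $(2q+b)(\xi-q) = 2q\xi - 2q^2 + b\xi - bq$ and, crucially, $(\xi-q)^2 = (\xi-q)(\xi-q) = \xi^2 - \xi q - q\xi + q^2$ (note that $-\xi q - q\xi$ does \emph{not} collapse to $-2q\xi$, since $\xi$ and $q$ need not commute in $\mathbb{H}$; the coefficients $b,c$ commute with the formal variable, but $\xi$ and $q$ are arbitrary quaternion values). Adding the three expansions, the $q^2$ contributions are $q^2 - 2q^2 + q^2 = 0$, the $bq$ contributions are $bq - bq = 0$, and what survives is
\begin{equation*}
c + b\xi + \xi^2 + (2q\xi - q\xi - \xi q) = P(\xi) + (q\xi - \xi q).
\end{equation*}
Using the hypothesis $P(\xi)=0$ then yields $E(\xi,q) = q\xi - \xi q$, and the final claim follows at once: if $q\xi = \xi q$ then the commutator vanishes and $E(\xi,q)=0$.

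There is no real obstacle here; the computation is a few lines of bookkeeping. The only pitfall is reflexively performing a real-variable expansion and writing $(\xi-q)^2 = \xi^2 - 2q\xi + q^2$, which is false over $\mathbb{H}$ and would wipe out precisely the commutator term that the theorem is designed to isolate. The content of the theorem is therefore that the Taylor remainder of a quadratic quaternion polynomial at a root is measured exactly by how much the evaluation point fails to commute with that root, a fact that the subsequent sections will exploit in the analysis of Newton and Halley iterations.
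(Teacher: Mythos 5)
Your computation is correct: expanding $(2q+b)(\xi-q)$ and $(\xi-q)^2=\xi^2-\xi q-q\xi+q^2$ without collapsing $-\xi q-q\xi$ to $-2q\xi$, the $q^2$ and $bq$ terms cancel and what remains is $P(\xi)+q\xi-\xi q=q\xi-\xi q$ since $\xi$ is a root. The paper omits the proof (citing \cite{kalQ}), but this direct expansion, with the non-commutativity of $\xi$ and $q$ as the only subtlety, is exactly the intended argument.
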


This gives rise to the definition of Newton method and a corresponding expansion.

\begin{thm} \label{Newt}  Suppose $\xi$ is a simple root of $P(x)$. Given $q$ with $P'(q) \not =0$,
\begin{equation} \label{T15eqn.p3}
N(q) \equiv q- P'(q)^{-1}P(q)=
 \xi + P'(q)^{-1} (\xi -q)^2 - P'(q)^{-1} E(\xi, q). \quad \square
\end{equation}
\end{thm}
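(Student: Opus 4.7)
The plan is to obtain the formula for $N(q)$ by a direct algebraic rearrangement of the Taylor identity from Theorem \ref{Expansion}, being careful to track left/right multiplication throughout since $\mathbb{H}$ is noncommutative.

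First, I would write out the Taylor identity explicitly for the monic quadratic. Since $P''(q)/2! = 1$, Theorem \ref{Expansion} reads
\begin{equation*}
P(q) + P'(q)(\xi - q) + (\xi - q)^2 = E(\xi,q).
\end{equation*}
Next, I would isolate $P(q)$:
\begin{equation*}
P(q) = -P'(q)(\xi - q) - (\xi - q)^2 + E(\xi,q).
\end{equation*}
Here it is important that the $(\xi-q)^2$ term has no coefficient in front, so there is no ordering ambiguity, and the $P'(q)$ factor sits on the left of $(\xi-q)$, which is exactly the position needed for the next step.

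Then, using $P'(q) \neq 0$ (the hypothesis that rules out the obstruction to invertibility), I would left-multiply the previous equation by $P'(q)^{-1}$, which cleanly cancels the leading $P'(q)$ in the first term on the right:
\begin{equation*}
P'(q)^{-1} P(q) = -(\xi - q) - P'(q)^{-1}(\xi - q)^2 + P'(q)^{-1} E(\xi,q).
\end{equation*}
Finally, I would substitute this into the definition $N(q) = q - P'(q)^{-1} P(q)$ and simplify:
\begin{equation*}
N(q) = q + (\xi - q) + P'(q)^{-1}(\xi - q)^2 - P'(q)^{-1} E(\xi,q) = \xi + P'(q)^{-1}(\xi - q)^2 - P'(q)^{-1} E(\xi,q),
\end{equation*}
which is the claimed identity.

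The proof is essentially a one-line algebraic manipulation and there is no real obstacle; the only place to be attentive is the noncommutativity, specifically that one must left-multiply (not right-multiply) by $P'(q)^{-1}$ so that the factor cancels the $P'(q)$ which appears on the left of $(\xi-q)$ in the Taylor expansion. The hypothesis that $\xi$ is a \emph{simple} root is not actually needed to derive the identity itself, but it ensures that the right-hand side encodes the expected quadratic-order error behavior of Newton's method (the $(\xi-q)^2$ term), with the residual term $P'(q)^{-1}E(\xi,q)$ measuring the failure of $q$ to commute with $\xi$.
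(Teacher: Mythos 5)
Your derivation is correct: the paper omits the proof (marking the theorem as an immediate consequence of Theorem \ref{Expansion}), and your rearrangement of the Taylor identity, with left-multiplication by $P'(q)^{-1}$ to cancel the $P'(q)$ sitting on the left of $(\xi-q)$, is exactly that intended one-line argument. Your side remark that simplicity of $\xi$ is not needed for the algebraic identity itself is also accurate.
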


\begin{definition} \label{defNewt}
Given a seed $q_0$, let the {\it Newton fixed point iteration} be
\begin{equation}
q_{k}=N(q_{k-1})=q_{k-1}- P'(q_{k-1})^{-1}P(q_{k-1}), \quad k \geq 1.
\end{equation}
\end{definition}

The iteration is well-defined everywhere except for the solution to $P'(x)=0$.
For a complex polynomial $N$ defines Newton method, the first member of the infinite Basic Family of iterations functions, see \cite{kalbook}. The behavior of Newton method for a real or quadratic complex polynomial is well-understood: if there are two distinct roots  the basin of attraction of each root is the Voronoi region of the root, i.e. the set of all points in the Euclidean plane that are closer to this root than the other. However, over the quaternions even for a quadratic polynomial Newton method could exhibit a strange behavior. We consider an example from \cite{kalQ}. Suppose that $P(x)=x^2-({\bf i}+{\bf j})x+{\bf k}$. It can be shown that ${\bf j}$ is the only solution of $P(x)=0$. If we set $q_0={\bf j}+ \epsilon$, then it can be seen that for the next iterate we have, $|q_1- {\bf j}| \approx \epsilon^2 = |q_0- {\bf j}|^2$. Now suppose that $q_0={\bf j}+  \epsilon_1 + \epsilon_2 {\bf i}+ \epsilon_3 {\bf k}$. Then $|q_0- {\bf j}|= \sqrt{\epsilon_1^2+\epsilon_2^2+\epsilon_3^2}$. It can be shown that
we can select $\epsilon_1,  \epsilon_2, \epsilon_3$ so that $|q_1-{\bf j}| < |q_0 - {\bf j}|$. However, we can also select these so that $|q_1-{\bf j}| > |q_0 - {\bf j}|$. Hence in every neighborhood of the root ${\bf j}$ there is a repulsive direction, i.e. a direction where the Newton iterate gets farther away from the root than the current iterate.  This is contrary to the case of a complex polynomial where each root is necessarily an attractive fixed point of Newton iteration function. This implies the behavior of Newton or other iterations could be chaotic even in a neighborhood of a root.

Janovska and Opfer \cite{{Janovska}} consider solving quaternionic roots by Newton method for the equation $x^n-a$ and consider the convergence case of a formal Newton method and local convergence.  The above example shows even for a quadratic the performance could be chaotic.

\subsection{Halley Method for  Quadratic Quaternion Polynomials}
We consider Halley method for quadratic quaternion polynomials. Using the results in \cite{kalQ} we have

\begin{thm} For a $q$ assume $P(q) \not=0$, $P'(q) \not =0$, $P'(q) \not =1$.  Then,
\begin{equation} \label{Halley}
H (q) \equiv q-P'^{-1}P-P'^{-1}\Delta^{-1} P P'^{-1}P =   - P'^{-1} \Delta^{-1} (\xi -q)^3 + E_3,
\end{equation}
where, $\Delta=  ( P'  -  P P'^{-1})$ and
\begin{equation}
E_3=  P'^{-1} \Delta^{-1} \bigg ( P P'^{-1}E(\xi,q)-E(\xi, q)(\xi-q) \bigg)-  P'^{-1}E(\xi,q). \quad \square
\end{equation}
\end{thm}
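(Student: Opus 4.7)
The plan is to exploit the quaternionic Taylor identity of Theorem~\ref{Expansion}. Set $h = \xi - q$ and suppress the evaluation point $q$ throughout, so the identity reads $P + P'h + h^2 = E$ with $E = E(\xi,q)$. Rearranging gives a single substitution rule $-P = P'h + h^2 - E$ that expresses any factor of $P$ in terms of $h$, $P'$, and the commutator-type defect $E$, which vanishes whenever $q$ commutes with $\xi$.

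First I would dispatch the Newton portion of $H(q)$ directly via the identity: $q - P'^{-1}P = q + P'^{-1}(P'h + h^2 - E) = \xi + P'^{-1}h^2 - P'^{-1}E$, recovering the expansion of Theorem~\ref{Newt}. The crucial algebraic observation is that $\Delta = P' - PP'^{-1} = (P'^2 - P)P'^{-1}$, which yields two identities that keep intermediate expressions manageable:
\[
P'^{-1}\Delta^{-1} = (P'^2 - P)^{-1}, \qquad (P'^2 - P)^{-1}\,\Delta = P'^{-1}.
\]

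The heart of the proof is analyzing the cubic correction $P'^{-1}\Delta^{-1}PP'^{-1}P$. Expanding \emph{only} the rightmost factor $P$ by the Taylor identity gives $PP'^{-1}P = PP'^{-1}E - Ph - PP'^{-1}h^2$, and then multiplying $-P = P'h + h^2 - E$ on the right by $h$ produces $-Ph = P'h^2 + h^3 - Eh$. Regrouping yields
\[
PP'^{-1}P = h^3 + \Delta\, h^2 + \bigl(PP'^{-1}E - Eh\bigr),
\]
in which the second-order piece conveniently carries the factor $\Delta$ on the left. Left-multiplying by $(P'^2 - P)^{-1}$ and using $(P'^2 - P)^{-1}\Delta = P'^{-1}$ converts $\Delta h^2$ into a $P'^{-1}h^2$ that exactly cancels the corresponding term coming from the Newton part. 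What survives is the cubic main term $-P'^{-1}\Delta^{-1}h^3$ together with an $E$-dependent residue that collects into $E_3$.

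The main obstacle is bookkeeping under non-commutativity: substitutions must be performed one occurrence of $P$ at a time, and one cannot swap left and right multiplications by $P'^{-1}$ or commute $h$ past $P'$, $P$, or $E$. The factorization $\Delta = (P'^2 - P)P'^{-1}$ is the one spot where an interior cancellation is genuinely available, and it is precisely this identity that makes the second-order terms vanish against each other and the cubic order of convergence drop out cleanly.
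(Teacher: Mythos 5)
Your route is the natural one, and the paper itself offers no argument to compare against: the theorem is stated with a reference to \cite{kalQ} and no proof is given here. Every identity you state checks out: $E=P+P'h+h^2$ with $h=\xi-q$, the Newton part $q-P'^{-1}P=\xi+P'^{-1}h^2-P'^{-1}E$, the factorization $\Delta=(P'^2-P)P'^{-1}$ with $P'^{-1}\Delta^{-1}=(P'^2-P)^{-1}$, and the key expansion $PP'^{-1}P=h^3+\Delta h^2+\bigl(PP'^{-1}E-Eh\bigr)$, whose $\Delta h^2$ piece indeed cancels the $P'^{-1}h^2$ from the Newton part after left-multiplication by $P'^{-1}\Delta^{-1}$.

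The one place you stop short is the final sentence, where you assert that the residue ``collects into $E_3$.'' Carry that step out: summing $\xi+P'^{-1}h^2-P'^{-1}E$ with $-P'^{-1}\Delta^{-1}\bigl[h^3+\Delta h^2+(PP'^{-1}E-Eh)\bigr]$ gives
\begin{equation*}
H(q)=\xi-P'^{-1}\Delta^{-1}h^3-P'^{-1}\Delta^{-1}\bigl(PP'^{-1}E-Eh\bigr)-P'^{-1}E .
\end{equation*}
This differs from the printed identity in two places: the leading $\xi$ is present (as it must be --- compare the Newton expansion in Theorem~\ref{Newt}, and note that in the commutative case $E=0$ one gets exactly $H(q)=\xi-P'^{-1}\Delta^{-1}h^3$, whereas the printed right-hand side would tend to $0$ rather than $\xi$ as $q\to\xi$), and the bracketed term in $E_3$ appears with the opposite sign, i.e.\ the residue is $P'^{-1}\Delta^{-1}\bigl(E(\xi,q)(\xi-q)-PP'^{-1}E(\xi,q)\bigr)-P'^{-1}E(\xi,q)$. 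A first-order-in-$E$ expansion of $H$ confirms your version and not the printed one, so these are best read as typographical slips in the statement. Your derivation is correct; you should finish the bookkeeping explicitly and record the two corrections rather than asserting agreement with the formula as printed.
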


\begin{definition}
Given a seed $q_0$, let the {\it Halley fixed point iteration} be
\begin{equation}
q_{k}=H(q_{k-1}), \quad k \geq 1.
\end{equation}
\end{definition}

\section{Local Invariant Plane for Newton and Halley Methods}
\label{LIPlane}
Here we consider the quadratic quaternion $P(x)=x^2+bx+c$ with two roots $\alpha$ and $\beta$ from different conjugacy classes.  We consider three iterative methods for such a quadratic which act on a quaternion $q$ as follows:

\noindent (i) Left-Newton method
\begin{equation}
 N_l(q)=q- P'(q)^{-1} P(q),
\end{equation}
(ii) Right-Newton method
\begin{equation}
N_r(q)=q- P(q)P'(q)^{-1},
\end{equation}
(iii) Halley method
\begin{equation}
H(q)=q- (P'(q))^{-1} P(q) - P'(q)^{-1} \Delta^{-1}P(q) P'(q)^{-1}P(q),
\quad \Delta= P'(q)-P(q) P'(q)^{-1}.
\end{equation}
Each root of $P(x)$ is  a fixed point of all three iteration functions. Let $f$ denote any of the three iteration functions. Since
$\mathbb{H}$ can be identified with $\mathbb{R}^4$,
$f$ can be viewed as a mapping from $\mathbb{R}^4$ into $\mathbb{R}^4$. Let
\begin{equation}
D(q)= (d_{ij})= (\partial f_i(q)/ \partial q_j)
\end{equation}
denote the $4 \times 4$ Jacobian matrix for this mapping at $q$.

\begin{thm} \label{inv} Let $\alpha$ be a root of $P(x)=x^2+bx+c$, then the determinant of $D(\alpha)$ is zero. More specifically, the rank of $D(\alpha)$ is at most $2$.
\end{thm}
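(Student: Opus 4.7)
The plan is to linearize each of the three iteration functions at the fixed point $\alpha$ and show that in every case the differential factors through a quaternionic commutator map on $\mathbb{H}$, which automatically has rank at most two.

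Taking $\xi = \alpha$ and $q = \alpha + h$ in Theorem \ref{Newt} yields $\xi - q = -h$ and $E(\alpha, q) = q\alpha - \alpha q = h\alpha - \alpha h$, so
\begin{equation*}
N_l(\alpha + h) - \alpha \;=\; P'(\alpha + h)^{-1}\bigl(h^2 + \alpha h - h\alpha\bigr).
\end{equation*}
Since $\alpha$ is a simple root (the other root $\beta$ lies in a different conjugacy class), $P'(\alpha) = 2\alpha + b$ is invertible; expanding $P'(\alpha+h)^{-1} = P'(\alpha)^{-1} + O(h)$ and dropping the $h^2$ term gives the differential
\begin{equation*}
DN_l(\alpha)\,h \;=\; P'(\alpha)^{-1}\bigl(\alpha h - h\alpha\bigr).
\end{equation*}
A parallel direct expansion $P(\alpha + h) = h\,P'(\alpha) + (\alpha h - h\alpha) + (bh - hb) + O(h^2)$ gives $DN_r(\alpha)\,h = -\bigl((\alpha + b)h - h(\alpha + b)\bigr)\,P'(\alpha)^{-1}$. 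For Halley, the extra term $P'(q)^{-1}\Delta(q)^{-1}P(q)P'(q)^{-1}P(q)$ contains $P(q)$ as a factor twice, and $\Delta(\alpha) = P'(\alpha)$ is invertible, so this correction is $O(h^2)$ near $\alpha$; hence $DH(\alpha) = DN_l(\alpha)$.

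The key remaining fact is an elementary observation about $\mathbb{H}$: for any $w \in \mathbb{H}$, the $\mathbb{R}$-linear commutator map $C_w \colon \mathbb{H} \to \mathbb{H}$ defined by $C_w(h) = wh - hw$ has rank at most two. Its kernel contains the $\mathbb{R}$-span of $\{1, w\}$; writing $w = w_1 + \vec{w}$ and $h = h_1 + \vec{h}$ with pure imaginary parts in the span of $\mathbf{i}, \mathbf{j}, \mathbf{k}$, a short calculation yields $wh - hw = 2\,\vec{w}\times \vec{h}$, so the kernel has real dimension $4$ when $\vec{w} = 0$ and exactly $2$ otherwise. In either case, the image dimension is $\leq 2$.

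To conclude, $DN_l(\alpha)$ and $DH(\alpha)$ are obtained from $C_\alpha$ by left multiplication by $P'(\alpha)^{-1}$, while $DN_r(\alpha)$ is obtained from $C_{\alpha + b}$ by right multiplication by $-P'(\alpha)^{-1}$. Both left and right multiplication by an invertible quaternion are $\mathbb{R}$-linear isomorphisms of $\mathbb{H}$ and hence preserve rank, so each of the three Jacobians has rank at most two, and in particular vanishing determinant. The only step deserving real care is checking that the $O(h^2)$ remainders genuinely contribute nothing to the first-order part; this is most delicate for Halley, but follows immediately from $P(\alpha) = 0$ together with the invertibility of both $P'(\alpha)$ and $\Delta(\alpha)$.
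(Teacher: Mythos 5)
Your proposal is correct, and it reaches the conclusion by a route that is organized differently from the paper's. The paper's proof restricts attention from the outset to directions $u$ that commute with $\alpha$ (resp.\ with $\alpha+b$ for Right--Newton), expands $f(\alpha+ru)$ in the real parameter $r$, and shows the linear term cancels, thereby exhibiting two explicit directions in the kernel of $D(\alpha)$. You instead compute the full differential in an arbitrary direction $h$, obtaining $DN_l(\alpha)h=P'(\alpha)^{-1}(\alpha h-h\alpha)$, $DN_r(\alpha)h=-\bigl((\alpha+b)h-h(\alpha+b)\bigr)P'(\alpha)^{-1}$, and $DH(\alpha)=DN_l(\alpha)$, and then invoke the elementary fact that the commutator map $C_w(h)=wh-hw=2\,\vec w\times\vec h$ has rank at most $2$, with rank preserved under one-sided multiplication by the invertible quaternion $P'(\alpha)^{-1}$. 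The underlying Taylor expansion is the same (your identity $N_l(\alpha+h)-\alpha=P'(\alpha+h)^{-1}(h^2+\alpha h-h\alpha)$ checks out against a direct computation, as does the $O(h^2)$ estimate for the Halley correction via $\Delta(\alpha)=P'(\alpha)$), but your version buys strictly more: it identifies the kernel exactly as the centralizer of $\alpha$ (resp.\ $\alpha+b$) and the image as $P'(\alpha)^{-1}\cdot(\vec\alpha^{\,\perp}\cap\mathrm{Im}\,\mathbb{H})$, so the rank is exactly $2$ whenever $\alpha$ (resp.\ $\alpha+b$) is non-real --- precisely the statement the paper's subsequent remark asserts without proof. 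The paper's argument is slightly more economical in that it never needs the cross-product formula, only the observation that commuting directions collapse the expansion. Both arguments share the (tacit in the paper, explicit in yours) hypothesis $P'(\alpha)=2\alpha+b\neq 0$, which is needed for the iteration functions to be defined at $\alpha$ in the first place.
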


\begin{proof}   We show that there are two linearly independent directions in which the iteration function $f$ does not change in its linear approximation.  Let $u$ be a quaternion that commutes with $\alpha$.
We will show that the partial derivative $f$ in the direction of $u$ is zero.

Let $q= \alpha +r u$, where $r$ is a real number.  Consider $P(q)$ and $P'(q)$.  Let $v= P'(\alpha)=2 \alpha +b$. Using that $u$ and $\alpha$ commute we have:
\begin{equation}
P(q)=r^2u^2+2r\alpha u+ r bu,  \quad P'(q)=v+ 2r u=v (1+ 2rv^{-1}u).
\end{equation}
We may write
\begin{equation}
P(q)=r^2u^2+rvu,
\end{equation}
and
\begin{equation}
P'(q)^{-1}= (1+ 2rv^{-1}u)^{-1}v^{-1}.
\end{equation}
Let
$w=2v^{-1}u$, and consider $(1+r w)^{-1}$ when $r$ is sufficiently small.  It can be shown that in such case we may write
\begin{equation}
(1+ r w)^{-1}= 1 - rw +r^2w^2 - r^3w^3 + \cdots = 1-rw + O(r^2).
\end{equation}
Thus
\begin{equation}
P'(q)^{-1}= (1+ 2rv^{-1}u + O(r^2))v^{-1}.
\end{equation}
This gives
\begin{equation}
P'(q)^{-1}P(q)= (1+ 2rv^{-1}u + O(r^2))v^{-1} (r^2u^2+rvu)=
\end{equation}
\begin{equation}
(1+ 2rv^{-1}u + O(r^2))(r^2v^{-1}u^2+ru)= ru- r^2v^{-1}u^2 + O(r^3).
\end{equation}
This implies if $f(q)=N_l(q)$ (Left-Newton), then
\begin{equation}
f(q)=\alpha + ru - ru- r^2v^{-1}u^2 + O(r^3)= \alpha +  O(r^3).
\end{equation}
Thus
\begin{equation}
f(\alpha + r u)=f(\alpha) +O(r^2).
\end{equation}
This implies that the partial derivative $f$ in the direction of $u$ is zero.  Since we can find at least two such directions $u$ that commute with $\alpha$, this proves the theorem for Left-Newton. For instance, if $\alpha$ is not real we can use the direction of $\mathbb{R}$ and $\alpha \mathbb{R}$. If $\alpha$ is real all four directions commute with it.  Thus in at least two directions the derivative is zero.

For the Right-Newton we write
\begin{equation}
P'(q)=v+ 2r u= (1+ 2ruv^{-1})v.
\end{equation}
We assume now $u$ is such that it commutes with $\alpha +b$. This allows to write
\begin{equation}
P(q)=ru(2\alpha +b + r u).
\end{equation}
Then
\begin{equation}
P(q) P'(q)^{-1}= ru(v+ru)v^{-1}(1+2ruv^{-1})^{-1}= ru-r^2 u^2 v^{-1}+
O(r^3).
\end{equation}
Then if $f(q)=N_r(q)$, we have
\begin{equation}
f(q)= \alpha + r u-ru+r^2u^2v^{-1}+O(r^3)= \alpha + O(r^2).
\end{equation}
Again the commutivity  condition $u(\alpha+b)=(\alpha+ b)u$ allows selecting two possible independent directions,  $\mathbb{R}$ and $(\alpha+b) \mathbb{R}$, along which the derivative of Right-Newton is zero.

For Halley method, analogous to Left-Newton, it suffices to choose $u$ a direction that commutes with $\alpha$.  The Halley method coincides with Left-Newton in the first-order approximation.
\end{proof}

\begin{remark}
It follows that the dimension of the eigenspace corresponding to $\lambda=0$ eigenvalue is 2. (Strictly speaking: at least two, but for simplicity we will be omitting this caveat in the following discussion.) There are two vectors that span this eigenspace. The first one is a pure-real vector $(1,0,0,0)$. The second zero mode at the root $\alpha$ corresponds to quaternion $\alpha$ itself in the case of the Left-Newton and Halley methods and $\alpha+b$ in the case of the Right-Newton method. It is well-known that in a commutative case when the root is simple, the derivative of the iteration function for the Newton method is zero. The proof above resembles the standard proof in the commutative case, except for the need to properly deal with the term $m^{-1} = 2\alpha+b$.
\end{remark}

\begin{remark}
In a general situation, when $\alpha$ and $b$ do not commute, the derivative is zero at exactly two directions; the other two directions have non-zero derivative. This statement is not crucial for our constructions so we neither state it as a theorem, nor prove it.

The key outcome for us is that the local basis near a root can be spanned by two eigenvectors $v_1$ and $v_2$ corresponding to zero, and two eigenvectors $v_3$ and $v_4$ corresponding to non-zero eigenvalues. Every point can be represented as $q=\alpha+x_1v_1+x_2v_2+x_3v_3+x_4v_4$. The effect of applying the function $f$ to it is equivalent to multiplying the vectors by the derivative matrix $D$:
\begin{equation}
f(q) \approx \alpha+D(x_1v_1+x_2v_2+x_3v_3+x_4v_4) =\alpha+\lambda_3 x_3v_3+\lambda_4x_4v_4.
\end{equation}
Thus, the contributions from the two zero eigenvectors are annihilated and the 2D plane spanned by two non-zero eigenvectors, $v_3$ and $v_4$, is {\it invariant}. Of course, here we mean first-order invariance, that is: the image of a point in the plane spanned by $v_3$ and $v_4$, is in the same plane, up to quadratic terms. Geometrically our locally invariant plane is a plane passing through the root $\alpha$ and going in the direction of the two eigenvectors of the derivative matrix corresponding to two non-zero eigenvalues. In proper terms we should consider eigen-directions as belonging to the tangent plane to our quaternion space at the root, but in practical terms we can identify the tangent plane at the root with points near the root.
\end{remark}

\begin{remark}
In some situations the non-zero eigenvalues would be complex and the corresponding two, or even all four, eigenvectors would be complex. In this case, to get the two directions from complex eigenvectors $v_1$ and $v_2$ we consider two real 4D vectors $\Re(v_1)$ (real part) and $\Im(v_1)$ (imaginary part). It is a standard linear algebra result that these two vectors are linearly independent. The decomposition above then should be replaced by $\alpha+x_1\Re(v_1)+x_2 \Im(v_1)+x_3v_3+x_4v_4$ or $\alpha+x_1\Re(v_1)+x_2 \Im(v_1)+x_3\Re(v_3)+x_4\Im(v_3)$.
\end{remark}

\section{Polynomiography with Quaternion Quadratic Polynomials}

Polynomiography is the algorithm visualization of polynomial
equations using iteration functions, see \cite{kalbook}-\cite{kal2005c} where it is defined in the context of complex
polynomial root-finding. An individual image resulting from
polynomiography is called a {\it polynomiograph}. In this section we
describe how we may do polynomiography with quaternion quadratic
polynomials in dimensions 2, 3, and 4. First we discuss how to
construct quadratic quaternions with polynomials having prescribed roots.

\subsection{Quaternion Quadratics with Prescribed Roots}

We wish to construct a quaternion quadratic
\begin{equation} \label{qq0}
P(x)=x^2+bx+c
\end{equation}
having two roots $\alpha, \beta$ from two different conjugacy
classes. From the general construction described in \cite{kalQ}  the corresponding equation would be
\begin{equation} \label{qq1}
P(x)=(x-\gamma)(x- \alpha),
\end{equation}
where
\begin{equation} \label{qq2}
\gamma=(\beta-\alpha)\beta(\beta-\alpha)^{-1}.
\end{equation}
Expanding $P(x)$ and since constants commute with $x$ we get
\begin{equation}
P(x)=x^2 - (\gamma + \alpha)x+ \gamma \alpha.
\end{equation}
Thus $b$ and $c$ can be described in terms of $\alpha, \beta$.

Alternatively, in this special case of a quadratic polynomial with prescribed roots we could derive $b$ and $c$, simply by making use of the fact that
\begin{equation}
\alpha^2+b \alpha +c =0, \quad \beta^2+b \beta +c =0.
\end{equation}
Subtracting the first equation from the second we can solve for $b$ and subsequently for $c$ to get
\begin{equation}
b=(\beta^2-\alpha^2)(\beta-\alpha), \quad c= -(\alpha^2 + b \alpha) =-(\beta^2 + b \beta).
\end{equation}

\subsection{Polynomiography with Quaternion Quadratics}
Polynomiography of a complex quadratic polynomial under Newton or Halley methods
is quite simple looking in the sense that each basin of attraction is the Voronoi cell of a root. However, for quaternion polynomiography it is very different.

One way of associating polynomiography  with a quaternion quadratic
polynomial $P(x)$ is to simply apply polynomiography methods to
$F(x)= P(x) \overline P(x)$. As we have seen in Theorem \ref{thmFx} there is a direct connection between the roots of $P$ and those of $F$.

In what follows we will describe several approaches for generation of a two-dimensional polynomiographs from the  given iteration functions we have considered. To begin with, in order to produce a 2D image, we select a rectangular part of a plane in the quaternion space. We will call it a {\it visualization area}.
For any choice of the visualization area we give: (i) the Left-Newton method polynomiograph; (ii) the Right-Newton method polynomiograph; (iii) the Halley method polynomiograph; (iv) computation times for each of the three methods.  The quaternion polynomial we chose for numerical experiments has roots at
\begin{equation} \label{roots}
\alpha = -1.3+ 2.1{\bf i} +0.17{\bf j} -0.31{\bf k},\quad \beta =
1.4 +0.7{\bf i} -0.23{\bf j} +0.28{\bf k}.
\end{equation}
Our reasoning for this particular choice is because we want the roots to be close enough to the complex plane. As some of our visualization methods use projection into the complex plane we wished to visualize them, at least initially, in a ``good'' situation where the roots and their possible ``shadows'' on the complex plane are not too far apart.  The roots are chosen ``random enough'' to avoid accidental commutativity of coefficients, roots, poles (with respect to iteration functions), etc. The coefficients $b, c$ in $P(x)$ are
\begin{equation}
b=-(0.1+2.6664 {\bf i} +0.5611 {\bf j} +0.0741 {\bf k}), \quad c=-2.9569+ 2.0171 {\bf i}-0.71178 {\bf j} -1.658 {\bf k}.
\end{equation}

Before describing four 2D polynomiography methods we give a definition to help simplify their description.

\begin{definition} Given an iteration function $f(x)$ for approximation of roots of a given quaternion polynomial $P(x)$, the {\it orbit} of a given quaternion $q_0$, called the {\it initial iterate} (or {\it seed}), is the sequence $O(q_0)=\{q_k=f(q_{k-1}): k=1,  \dots \}$. A given algorithm based on iteration of $f(x)$ terminates the orbit for some integer $k(q_0)$.  We call $q_{k(q_0)}$ the {\it terminal iterate} of $q_0$. We refer to any iterate other than the initial and terminal iterates as {\it intermediate iterate}.
\end{definition}

\subsection{Method I:  Tracing Quaternion Iterates}
\label{CPVisualization}

Our first choice of the visualization area is the complex plane.  In this approach we iterate points selected from the complex.  We begin with each points in a rectangular area in the complex plane and consider what happens to the iterates.

For a quaternion $q$, let us denote by ${\Cal C}(q)$ the projection of $q$ onto the complex plane. In other words
\begin{equation}
{\Cal C}(a_1+a_2 {\bf i} + a_3 {\bf j}+ a_4 {\bf k})= a_1+a_2 {\bf i}.
\end{equation}
We consider a complex-plane neighborhood that is big enough to contain the projection of both roots: ${\Cal C}(\alpha)$ and ${\Cal C}(\beta)$. We will choose a square of size $2s\times 2s$, containing the projections. We discretize the square as a grid of points with step size $h$. Every point of the resulting grid is thought of as initial iterate $q_0$, but also as a pixel of the image we produce. We assign a color to the pixel $q_0$ according to the number of iterations it takes for the sequence beginning at $q_0$ to converge to root, or the number of iterations is above a threshold. Here, $q_0\in \mathbb C$ is also considered as an element of $\mathbb H$ and iterated over quaternions using one of the root-finding algorithms $f$ we study:
\begin{equation}
q_{n+1}=f(q_n).\label{ComplexPlane}
\end{equation}
In this method, the initial point of iteration is a complex number but all the following iterations are quaternions. The pixels of the resulting image are initial points of iteration taken from some 2D part of the complex plane $\bc$. We call this method of visualization a {\it complex plane polynomiograph}.

Below we present and discuss the images for complex plane polynomiographs. Every pixel corresponds to a point from the square of the complex plane $[-2.3, 2.3] \times [-2.3,2.3]$.  The images are computed with resolution $1024\times 1024$.   For time measurement purposes we used $100 \times 100$ to make the computations more efficient.  We allow up to 70 iterations for a point. As a criterion to stop the iteration we use either reaching a fixed point or reaching a 2-, 3-, 4- or 5-cycle. We do not check for possibility of a cycle with a longer period.  The stopping criterion used is:
\begin{equation}
\| q_{n+1}-q_n\| \leq 0.01.
\end{equation}

\begin{figure}
\begin{center}
\includegraphics[width=2in]{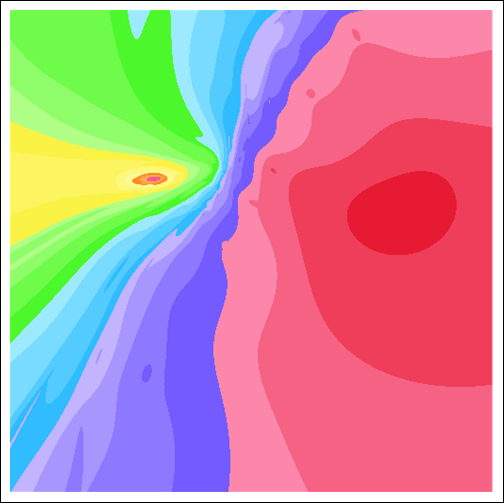}
\includegraphics[width=2in]{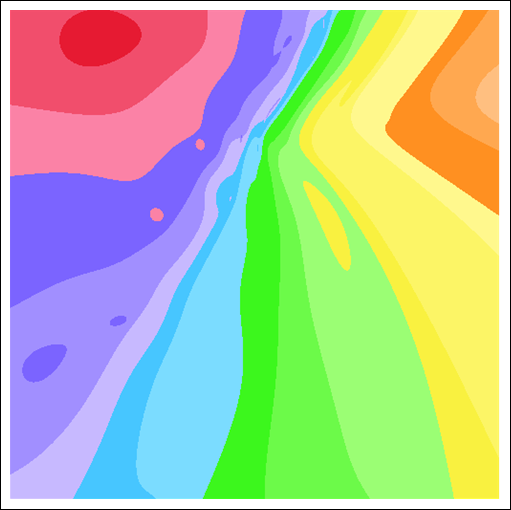}
\includegraphics[width=2in]{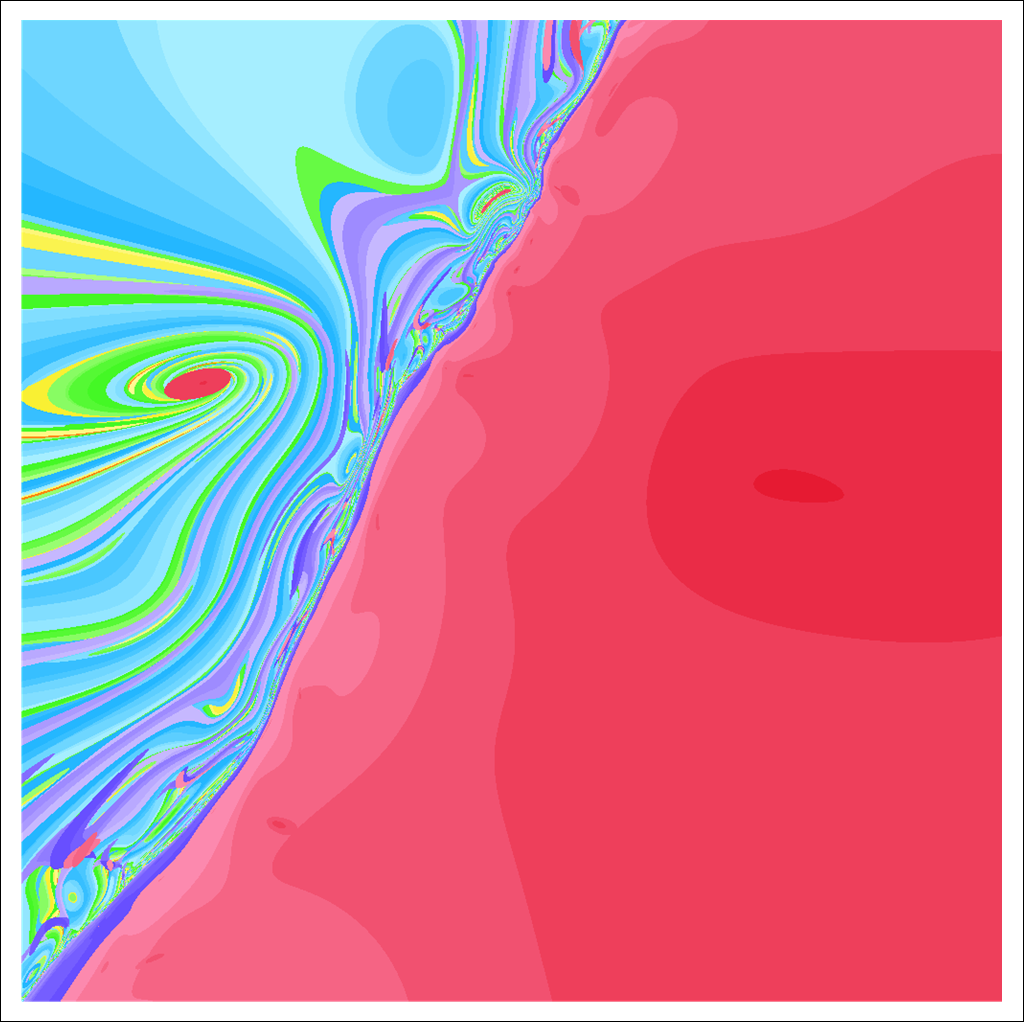}
\caption{Left-Newton, Right-Newton, and Halley method: Complex Plane polynomiograph.}
\label{leftComplex}
\end{center}
\end{figure}

We will do a brief analysis of the images to explain how they help in understanding convergence around the fixed points. In  Fig.~\ref{leftComplex}, left image,  we see two basins of attraction: one is on the left and the other one is on the right. Since the image is based on complex plane visualization, our quaternion roots do not lie on it. Nevertheless, the quaternion roots are close enough to this plane for points near the roots to converge. The points on the right of the image converge to the second root (which is attracting for the case of Left-Newton method). We see a rapid convergence by observing big areas of solid color around the root. The points on the left of the image converge to the first root, but the convergence is much slower. There are no big circle-like areas around that root (strictly speaking, the root's projection to the complex plane). Instead we observe an ellipse-like eccentric structures and horizontal major axis.

Let us explain why for the left root the eccentricity is high and the major axis is horizontal. We recall (Theorem \ref{inv}) that near a root (fixed point) there are two vanishing and two non-vanishing directions. One of the vanishing directions is always the direction of the real numbers ($x$-axis, the horizontal direction on  Fig.~\ref{leftComplex}, left image). The $y$-direction is generally not vanishing. Locally we represent a small deviation from projection ${\cal C} (\alpha)$ of root $\al$ as:
\begin{equation}
{\cal C} (\alpha)+\delta_x + \delta_y {\bf i}.
\end{equation}
Applying our iteration function results in
\begin{equation}
{\cal C} (\alpha)+ O(\delta_x^2) + \lambda \delta_y {\bf i} +O(\delta_y^2),
\end{equation}
where $\lambda$ is the derivative in $y$-direction at the fixed point. The important observation here is that $x$-component of the iteration vanishes. Thus, instead of  a 2D-vector $(\delta_x,\delta_y)$ we deal now with a 2D-vector $(0,\delta_y)$ perhaps multiplied by some number $\lambda$. While $\lambda$ is not necessarily within the unit circle, its magnitude is not particularly large ($|\lambda| \sim 1$). Even if the resulting iterations converge slow due to $\lambda$ being close to $1$, still the points with non-zero first component will certainly converge even slower. Making the first component $0$ with all other things being equal makes point closer to the origin by a factor of $\sqrt 2$. We conclude that the point $(\delta_x,\delta_y)$ takes about the same number of times to converge as $(0,\delta_y)$, which is closer to the origin. Therefore the addition of $x$-component is ``free'' and doesn't add to convergence time (in the first order of approximation). This is the reason we see ellipses near the left fixed point stretched in the horizontal direction.

For the Right-Newton method, middle image in Fig.~\ref{leftComplex}, the left point is attracting --- one can see regular circles of a solid color near it. Notice that the ``shadows'' of the fixed points are now shifted. The projections of the roots would be
\begin{equation}
{\cal C}(\alpha)= -1.3+ 2.1 {\bf i} ,\quad {\cal C}(\beta)=
1.4 +0.7 {\bf i}.
\end{equation}
Our image is determined by the square $[-2.5,2.5]\times [-2.5, 2.5]$. For the Right-Newton method, the actual position of the ``root shadows'' is closer to $-2.0 +2.4i$ and $2.5+2.2i$. For the attracting point on all images in Fig.~\ref{leftComplex} the ``root shadow'' is visible as the darkest red circle. It's not really the projection of the actual root, but instead the points in the complex plane that converge to the actual root fastest. As the three methods behave somewhat differently, it is expected that the fastest converging complex point (the ``shadow'' of the root) would shift slightly, as indeed it does.

We observe a similar behavior for the Halley method, see right image in Fig.~\ref{leftComplex}. Again, the ellipses near the left fixed point are stretched horizontally. The left fixed point now is repelling: the points go away from it in spirals. Two initial points near the fixed point differing by a real number will show similar iteration behavior, and therefore for points in the horizontal direction it takes longer to escape (in a sense, they are ``closer'' to the fixed point).

On these images we see some of the benefits of choosing the complex plane to do visualization: we can observe both roots (although the dark color areas are not really the roots, they are close enough to them) and there is no apparent bias for one root or the other as the algorithm of visualization doesn't distinguish between them. It is worth noting that in the local invariant plane method, we will choose the planes individually for each root. Such there is no common plane to work with both roots in that method. Choosing the complex plane, which for our choice of roots is not too far from them, allows us to see more of a global behavior.

We also see similarities with Newton method for complex numbers: there is a line (somewhat deformed in the quaternion case) going between the roots, roughly half-distance between them. Here, however, the similarities end. While in the complex case the points closest to the left root were going into the left root (and similar for the right root), in the quaternion case only one root is strongly attracting. The other root shows either slow convergence (as for the case of Left and Right-Newton methods for the complex plane) or no convergence at all (Halley method in any plane near the root).\\

\subsection{Method II: Tracing Projection of Intermediate Iterations}
In this visualization we modify the previous approach and
after every iteration project the result down to the complex plane:
\begin{equation}
q_{n+1}={\Cal C}(f(q_n)).\label{ComplexPlaneProj}
\end{equation}
Images produced according to this technique would be called {\it complex plane projection polynomiographs}. While these iterations cannot converge to a true root $\alpha$, it is possible that they converge to the complex projection of it $\mathcal C(\alpha)$. The study and discussion of the images below show that there is indeed a fixed point in the complex plane for this method. It does not coincide, although is close, with the projection of the root.

In fact, all the images obtained via casting a quaternion to a complex number after every iteration (let us call them {\it projection images}) demonstrate presence of fixed points. On the images they correspond to the darkest shade of blue. We need to make clear that the fixed points of projection methods do not coincide with the fixed points of our quaternion iteration function $f$. To start with, the fixed points of quaternion iteration are in the quaternion space and not in the complex plane.
Instead, effectively we deal with a function $g$ from $\bc \to \bc$ constructed as:
\begin{equation}
g:\ \bc \to \bh \xrightarrow{f} \bh \xrightarrow{proj} \bc ,
\end{equation}
where the first arrow is the standard inclusion considering  a complex number as a quaternion.

Nevertheless, the fixed points of the resulting complex iterating functions are close to projections of the quaternion fixed points.
For the projection method, we find the two attracting fixed points, based on where iterations stop, at
\begin{eqnarray*}
\mbox{Left-Newton: }-1.17679+ 2.09022 {\bf i},\ 1.33468+ 0.70161 {\bf i},\\
\mbox{Right-Newton: }-1.17442+ 2.01315 {\bf i},\ 1.3359+ 0.522777 {\bf i} \\
\mbox{Halley: }-1.23475+ 2.05925 {\bf i},\ 1.39796+0.731103 {\bf i}
\end{eqnarray*}
All the images in these two sub-sections are produced for the following area of complex plane: $[-2.3,2.3]\times [-2.3,2.3]$. We remind the reader that the true roots, the quaternion fixed points, are  those in (\ref{roots}).

\begin{figure}
\begin{center}
\includegraphics[width=2in]{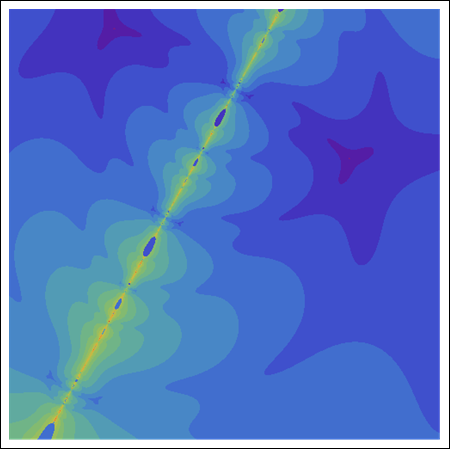}
\includegraphics[width=2in]{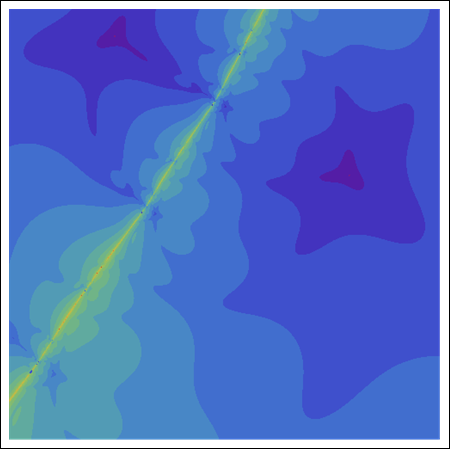}
\includegraphics[width=2in]{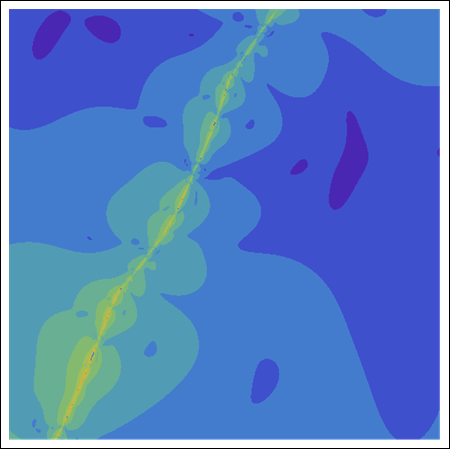}
\caption{Left-Newton, Right-Newton, Halley method: Complex Plane projection.}
\label{leftComplexProjection}
\end{center}
\end{figure}
Now, analyzing images in Fig.~\ref{leftComplexProjection} we observe that they all have two fixed points. The solid areas of color around them show that they are all attracting. It is worth reminding that these, however, are not the fixed points of our quaternion iteration, but rather of complex iteration obtained as a composition of quaternion iteration with projection.

Another common feature of the three images is a line (shown in yellow) separating the two fixed points. It goes, approximately, half way between the fixed points. The fixed points for each of our three methods are somewhat different and so is the line. For instance, a careful reader would notice that the line on left image in Fig.~\ref{leftComplexProjection} ends at the bottom of the image, while the line on the middle image in the same figure for the Right-Newton method ends in the same lower left corner of the image, but on its left side, and not the bottom. In case of Halley method, the ``line'' is somewhat more deformed. Again, we see that Halley method closer resembles the Left-Newton method, than the Right-Newton method.

\subsection{Method III: Tracing Projection of Congruent Intermediate Iterates}

Instead of projection $\Cal C$ from $\bh$ to $\bc$ we
we may consider $\Cal S$-projection, referred as {\it congruency projection}:
\begin{equation}
\Cal S(a_1+a_2 {\bf i} +a_3{\bf j}+a_4 {\bf k})= a_1+\sqrt{a_2^2+a_3^2+a_4^2} {\bf i}.
\end{equation}
The iteration is then defined as follows
\begin{equation}
q_{n+1}={\cal S}(f(q_n)).
\end{equation}

For congruent projection, the fixed points are at
\begin{eqnarray*}
\mbox{Left-Newton: } -1.17651+ 2.13343{\bf i},\ 1.31599+ 0.863941{\bf i},\\
\mbox{Right-Newton: }-1.17642+ 2.06473 {\bf i},\ 1.33545+ 0.687986{\bf i}\\
\mbox{Halley: }-1.23252+2.10451,\ 1.39705+ 0.88163 {\bf i}
\end{eqnarray*}
The effective fixed points of this iteration are close to the ones from the previous subsection. The shapes of borders on the images has changed. Also, in all three images in  Fig. \ref{leftComplexSpherical}  the curve separating the fixed points is curving more: in the previous case it was practically a straight line. Visually, as the areas of dark color are bigger on the first two images in Fig. \ref{leftComplexSpherical}, we observe that the Left-Newton and Right-Newton methods show better global convergence in this case, for the particular choice of visualization area.

\begin{figure}
\begin{center}
\includegraphics[width=2in]{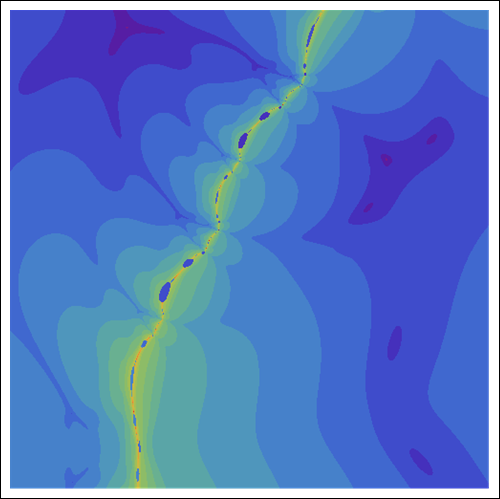}
\includegraphics[width=2in]{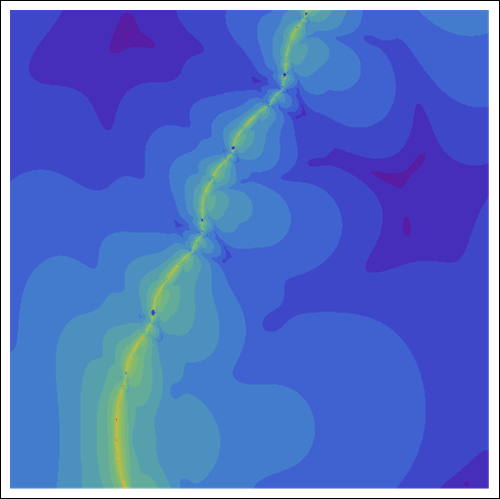}
\includegraphics[width=2in]{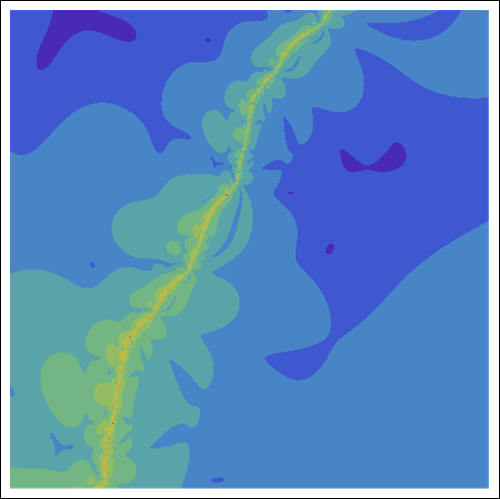}
\caption{Left-Newton, Right-Newton, Halley method: Congruency Projection.}
\label{leftComplexSpherical}
\end{center}
\end{figure}

\subsection{Method IV: Tracing  Iterates in  Locally Invariant Plane}

Choosing complex plane for visualization area has certain advantages as discussed earlier. One, however, can not ignore the fact that the complex plane $\br + \br {\bf i}$ is an arbitrary choice  of a plane in the 4D quaternion space. The only special direction in the quaternion space is the real axis $\br$. Any plane containing $\br$ and one other direction is unlikely to be distinguished from any other such plane. In particular, the choice $\br +\br {\bf i}$ can easily be $\br + \br {\bf j}$ or $\br + \br q$ with any non-real quaternion $q$: all of these planes have one real, commutative, direction and all points from within a plane commute with each other. As it is impossible to choose a ``natural'' 2D plane in the quaternion space, we may look at a ``natural'' choice of a plane for our particular polynomial. A good candidate for such a plane is the locally invariant plane defined in Section \ref{LIPlane}.

As the plane defined in Section \ref{LIPlane} is a) locally invariant; b) contains a fixed point of iterations; and c) uniquely defined at any given root, it is very reasonable to choose the plane as our 2D visualization plane. Thus instead of the complex plane in this method of visualization we work with the local invariant plane. The polynomiographs produced in this plane and discussed in this subsection will be called {\it local plane polynomiographs.}

All other steps in our image producing techniques remain essentially the same: pixels represent points in the (this time) locally invariant plane and the colors indicate the speed of convergence. However, as the locally invariant plane for root $\alpha$ generally will not coincide with the one defined at root $\beta$, we can not have both roots in our plane. We decided to make this unequal role of roots clear in this method by placing the center of the visualization area at the root, at which the plane is considered. The other root is not in the plane, but can be projected down to the plane. For the orientation of the plane we chose the following approach: the horizontal directions on the images in this subsection correspond to the direction from root, e.~g., $\alpha$ to the projection of the other root, in this case, $\beta$ to the locally invariant plane constructed for root $\alpha$. For the complex plane polynomiographs we did not have to be concerned with such issues as the complex plane has ``natural'' orientation: the real numbers go in the horizontal direction and the imaginary numbers $\br i$ go in the vertical direction. For the local plane polynomiographs the horizontal direction points, loosely speaking, to the other root. We chose the size of the visualization area in such a way that the other root is almost on the border of the image.

\begin{figure}
\begin{center}
\includegraphics[width=2in]{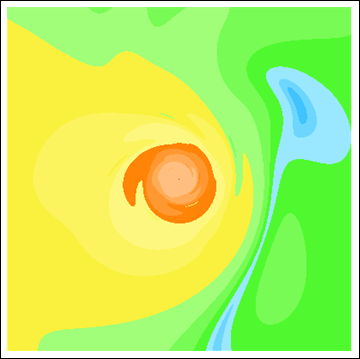}
\includegraphics[width=2in]{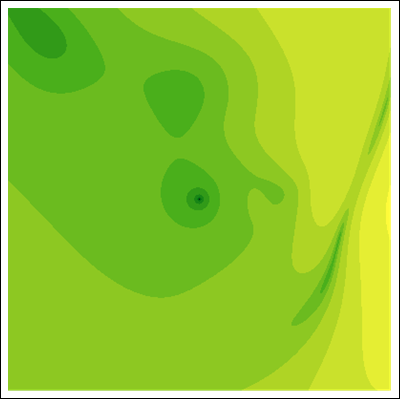}
\includegraphics[width=2in]{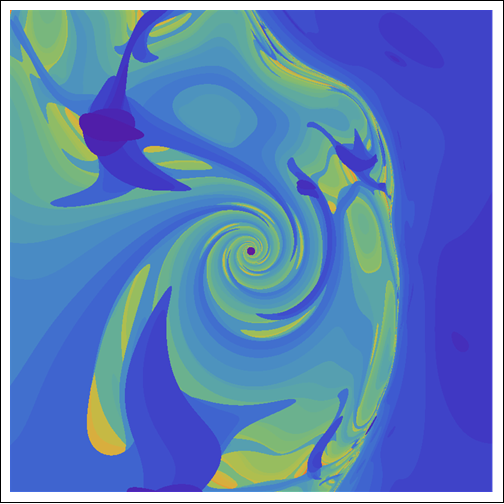}
\caption{Left-Newton, Right-Newton, Halley method: locally invariant plane.}
\label{LeftLocal}
\end{center}
\end{figure}

On left image in Fig.~\ref{LeftLocal} we see our first local plane polynomiograph produced for the Left Newton method. As discussed in Section ~\ref{LIPlane} for our particular choice of the polynomial the derivative matrix in the non-vanishing direction will have two complex eigenvalues with magnitude $|\lambda|>1$. It is worth noting that all the images are produced for root $\alpha$ for which: Left-Newton method is repelling (in our plane), Right-Newton method is attracting, and Halley method is repelling. For the other root, the situation will be the opposite: Left-Newton method is attracting, Right-Newton method is repelling, and Halley method is attracting. The linear terms of the iterations near a fixed point are the same for Left-Newton and Halley methods, thus any statement about stability of one of them automatically extends to the other one. We are not discussing a potential exception to this rule, when the non-vanishing eigenvalue $\lambda$ is such that $|\lambda|=1$ and the corresponding fixed point is not hyperbolic (in the plane). In this case the attraction of the fixed point is affected by quadratic terms and the Left-Newton stability could be different from stability for Halley method.

Due to the fact that the eigenvalue $\lambda$ is complex and  $|\lambda|>1$ we should expect points escaping the neighborhood of the fixed point (center of the image) in spirals: as indeed we observe on the left image in Fig.~\ref{LeftLocal}. Approximately half way between the roots, the area of convergence to the right root begins. For the particular choice of coloring scheme, the central root, repelling in this situation, is yellow and the attracting, right, root is green.

The Right-Newton method is visualized in the middle image in Fig.~\ref{LeftLocal}. It is worth noting that the locally invariant plane depends on the method: the plane for the Left-Newton method is different from the Right-Newton method plane, even at the same root. Thus, the first two images in Fig.~\ref{LeftLocal}  visualize different methods in different planes: the only common part of them is the root at the center. The other ``almost common'' part is the second root: here, however, we should remember that the second root is not in the locally invariant plane, we see only its projection to the plane. The first root, the fixed point we study, is in the center of the middle image in Fig.~\ref{LeftLocal} (dark green for the choice of colors). The presence of the second root can be seen at the midpoint of the right edge, bright yellow colors --- exactly where we should expect it according to our orientation of the locally invariant plane and the choice of the size for the visualization area. This particular main root, the fixed point, is attracting for the case of the Right-Newton method.

Although the locally invariant plane generally depends on the choice of a method, the plane for the Left-Newton method coincides with the one for Halley method as both methods have the same linear terms, thus having the same derivative matrix which alone dictates the choice of the invariant plane. Therefore, the image we turn our attention to, the right image in Fig.~\ref{LeftLocal}, shows exactly the same area as that of the Left-Newton method.

The image demonstrates an intriguing level of complexity perhaps somewhat unexpected for a quadratic polynomial. Although all our images are produced based on iterations, none of them exhibit fractal behavior. Nevertheless, the images are far from straightforward, as we can see for the case of Halley method. At the center of the image we observe a repelling fixed point with a complex multiplier, thus forcing the points to leave vicinity of the fixed point in spirals.  On the right, at the middle of the right edge, we see the second root of our polynomial, or, to be precise, its projection on our locally invariant plane. The root is attracting everything at the right part of the image. The tree-like structure at the top-left corner deserves a special attention. When constructing this image we were checking for 2-, 3-, 4- and 5-cycles. The tree-like structure is, apparently, the area converging to the 3-cycle:
\begin{equation}
-0.9+1.3{\bf i} -0.55{\bf j}+0.6{\bf k}, \quad -1+{\bf i}+1.5{\bf j}+0.4{\bf k}, \quad  -1.4+0.5 {\bf i}-0.4{\bf j}-0.9{\bf k}
\end{equation}
We did not detect any other cycles in the area we studied. This 3-cycle attracts about 70\% of the image points. Without checking for 3-cycles, the image would look like Fig.~\ref{HalleyInv01}
\begin{figure}
\begin{center}
\includegraphics[width=2in]{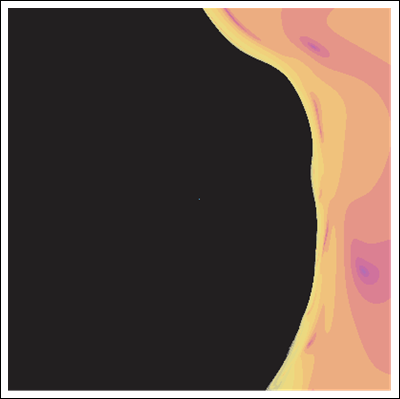}
\caption{Halley method in Fig.~\ref{LeftLocal} without cycle checking.}
\label{HalleyInv01}
\end{center}
\end{figure}

The noticeable difference between Halley iteration in  Figs.~\ref{LeftLocal} and \ref{HalleyInv01}gives us a certain justification for using images as a tool to study iterations in numerical analysis. It reveals existence of different areas of attraction, the speed of convergence, reflected in colors, and the character of convergence as well as divergence from the repelling fixed point (the spiral at the center).

In conclusion of our discussion for the images, it is worth noting that all the images presented for a certain method, e.~g. Halley method are found for the same polynomial. Thus, by gradually changing the plane of visualization we will somehow go from Fig.~\ref{leftComplex}  to Fig.~\ref{LeftLocal}. Exactly how this deformation is happening is perhaps a question for further studies.

\subsection{Time Measurements}

In the previous sections we discussed iterative methods and the expectations about speed of its convergence. The numerical results in the table confirm those expectations. In particular, we discussed that one of the roots is attracting for the Left-Newton method when we iterate in the locally invariant plane and repelling for the Right-Newton method. For the second root the roles are switched. The time measurement indeed show  that the Left-Newton method takes 60.5 seconds to compute the image and the Right-Newton method takes 39.9 seconds for the first root.
 For the second root we expect the numbers to switch and indeed they do: 39.6 seconds for the Left-Newton method and 61.0 seconds for the Right-Newton methods.

Another observation we can make is that apparently Mathematica's quaternion package works faster when the quaternions are complex numbers: the methods that project to complex numbers after every step finish significantly faster.  The operations (addition and multiplication) performed are quaternion operations but in these methods many quaternion operands happen to be complex numbers. Of course, the disadvantage of these methods is they never find a real quaternion solution. At best, a rough approximation to the projection of the quaternion to the complex plane.

\begin{table}
\begin{center}
\begin{tabular}{|c|c|c|c|}
\hline
{} Algorithm &Left-Newton & Right-Newton & Halley\\
\hline
 Method I: Tracing Quaternion Iterates & 68.4 & 84.4 & 132.9\\
Method II: Tracing Intermediate Projection & 31.6 & 30.7 & 41.8\\
Method III: Tracing Congruence Projection & 35.6 & 34.1 & 44.0\\
Method IV: Tracing Iterates in Invariant Plane  of $\alpha$ & 60.5 & 39.9 & 170.9\\
Method IV: Tracing Iterates in Invariant Plane  of $\beta$ & 39.6 & 61.0 & 74.2 \\
\hline
\end{tabular}
\caption{Time measurements (in seconds).}
\label{TimeMeasurement}
\end{center}
\end{table}

We remark that solving $F(x)$ via Newton's method to generate a polynomiograph of the same size requires only 2.2 seconds, shown in Fig.~ \ref{FigF} (left).   This suggests for practical purposes it is more efficient to solve $F(x)$ than using iterations over the quaternion space to solve $P(x)$.

\begin{figure}
\begin{center}
\includegraphics[width=2in]{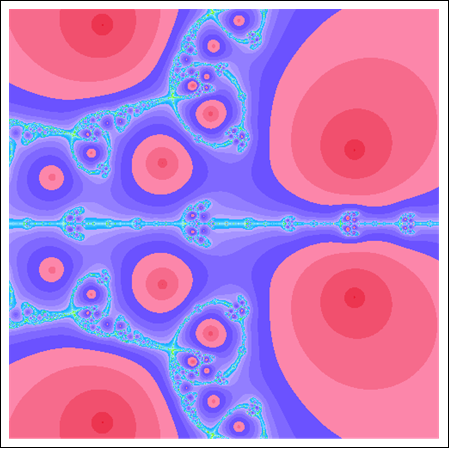}
\includegraphics[width=2in]{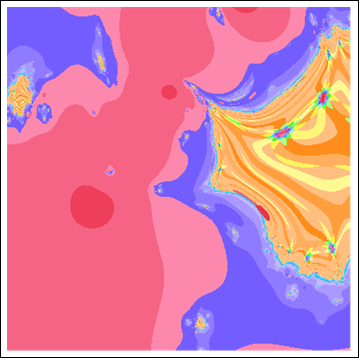}
\caption{Polynomiograph of Newton's method applied to F(x) (left).
Polynomiograph of a hybrid method, iterations of Newton with $P(x)$ and $F(x)$ (right).}
\label{FigF}
\end{center}
\end{figure}

\subsection{Hybrid 2D Polynomiography}
The connections between $P(x)$ and $F(x)=P(x)\overline P(x)$  also suggests hybrid algorithms. These in turn would lead to 2D polynomiography. For instance, consider a variation of using an iteration function $f$
in the quaternion space where the iterates are computed as applied to $P(x)$. If convergence
is not observed the algorithm could switch to applying $f$
to $F(x)$, possibly lifting it back to the quaternion space. More specifically,
denote the iteration of $f$  corresponding to $P(x)$ and
$F(x)$ by $f_P(x)$ and $f_F(x)$, respectively. Given a quaternion
$x_{k-1}$ as the current iterate and assuming that $|P(x_{k-1})|$
exceeds a desired tolerance $\epsilon$, we compute the next iterate
$x_k$ as follows: Let
\begin{equation}
y_k= {\cal S}(f_P(x_{k-1})),
\end{equation}
then if $|\overline P(y_k)| > \epsilon$ and $|\overline P(\overline y_k)| > \epsilon$, lift $y_k$ to a quaternion using Theorem \ref{Approx}.
Specifically, set $x_k$ as
\begin{equation}
x_k = \overline P(y_k) y_k  \overline P(y_k)^{-1} \quad \text{or} \quad
x_k=\overline P( \overline y_k)  \overline y_k  \overline P(
\overline y_k)^{-1}.
\end{equation}

Fig. \ref{FigF} (right) shows a polynomiograph with respect to performing two Newton iterations over quaternions with respect to $P(x)$,  followed by  projection down to complex and one Newton iteration with  respect to $F(x)$, and repetition of the process without lifting the result to quaternions.

\subsection{3D and 4D Polynomiography}

One may also employ an iterative method such as Newton or Halley
directly in the quaternion space and develop a corresponding coloring.
The behavior of these methods, including their convergence
remains to be further investigated.  We may consider a quadratic equation with prescribed zeros and then keep track of the behavior of the iterative methods directly in three of the four dimensions.  4D polynomiography can also be carried out by recording the history of the iterates of an iterative process, followed by projection
into a 3D subspace.



\bigskip


\end{document}